\DeclareFontFamily{U}{mathc}{}
\DeclareFontShape{U}{mathc}{m}{it}
{<->s*[1.03] mathc10}{}
\DeclareMathAlphabet{\mathscr}{U}{mathc}{m}{it}
\newtheorem{theo}{Theorem}[section]
\newtheorem{prop}[theo]{Proposition}
\newtheorem{defi}[theo]{Definition}
\newtheorem{rem}[theo]{Remark}
\newtheorem{example}[theo]{Example}
\newtheorem*{main}{Main Result}
\newcommand{\End}{\operatorname{End}}
\newcommand{\Hom}{\operatorname{Hom}}
\newcommand{\Coker}{\operatorname{Cok}\nolimits}
\newcommand{\id}{\operatorname{id}}
\renewcommand{\mod}{$-$\mathrm{mod}}
\newcommand{\smod}{$-$\underline{\mathrm{mod}}}
\newcommand{\proj}{$-$\mathrm{proj}}
\newcommand{\CM}{\mathrm{CM}}
\newcommand{\sCM}{\underline{\mathrm{CM}}}
\newcommand{\Ext}{\operatorname{Ext}}
\newcommand{\hExt}{\operatorname{\widehat{Ext}}}
\newcommand{\HH}{\operatorname{HH}\nolimits}
\newcommand{\hHH}{\operatorname{\widehat{HH}}}
\newcommand{\Gdim}{\mathrm{G\mbox{-}dim}}
\newcommand{\gldim}{\mathrm{gl.dim}\,}
\newcommand{\injdim}{\mathrm{inj.dim}}
\newcommand{\projdim}{\mathrm{proj.dim}}
\renewcommand{\L}{\Lambda}
\newcommand{\G}{\Gamma}
\newcommand{\Ae}{A^\textrm{e}}
\newcommand{\Le}{\Lambda^\textrm{e}}
\newcommand{\Ge}{\Gamma^\textrm{e}}
\newcommand{\LG}{\Lambda\otimes\Gamma}
\newcommand{\LGe}{(\Lambda\otimes\Gamma)^\textrm{e}}
\newcommand{\T}{\mathcal{T}}
\newcommand{\D}{\mathcal{D}}
\newcommand{\K}{\mathcal{K}}
\newcommand{\APC}{\underline{\mathrm{APC}}}
\numberwithin{equation}{section}
\begin{document}
%
%%%%%%%%%%%%%%%%%%%%%%     Title         %%%%%%%%%%%%%%%%%%%%%%
%
\title[Tate-Hochschild cohomology rings]{Tate-Hochschild cohomology rings for eventually periodic Gorenstein algebras}

%%%%%%%%%%%%%%%%%%%%%%%%%%%%%%%%%%%%
%       author one information
%%%%%%%%%%%%%%%%%%%%%%%%%%%%%%%%%%%%
\author[S. Usui]{Satoshi Usui}
\address{%
Satoshi Usui\\
Department of Mathematics, 
Tokyo University of Science \\ 
1-3 Kagurazaka, Shinjuku-ku, 
Tokyo, 162-8601,
Japan} 
\email{1119702@ed.tus.ac.jp}
%%%%%%%%%%%%%%%%%%%%%%%%%%%%%%%%%%%%
%       author one information
%%%%%%%%%%%%%%%%%%%%%%%%%%%%%%%%%%%%

%\thanks{}

\subjclass[2010]
{16E40, 16E05, 16G50.}

\keywords{Eventually periodic algebras, 
Gorenstein algebras, 
singularity categories, 
Tate-Hochschild cohomology, 
complete resolutions.}

%\date{}

%\dedicatory{}

%%%%%%%%%%%%%%%%%%%%%%%%%%%%%%%%%%%%
%           Abstract
%%%%%%%%%%%%%%%%%%%%%%%%%%%%%%%%%%%%
\begin{abstract}
Tate-Hochschild cohomology of an algebra is a generalization of ordinary Hochschild cohomology, which is defined on positive and negative degrees and has a ring structure. 
Our purpose of this paper is to study the eventual periodicity of an algebra by using the Tate-Hochschild cohomology ring.
First, we deal with eventually periodic algebras and show that they are not necessarily Gorenstein algebras.
Secondly, we characterize the eventual periodicity of a Gorenstein algebra as the existence of an invertible homogeneous element of the Tate-Hochschild cohomology ring of the algebra, which is our main result. 
Finally, we use tensor algebras to establish a way of constructing eventually periodic Gorenstein algebras. 
\end{abstract}
%%%%%%%%%%%%%%%%%%%%%%%%%%%%%%%%%%%%
%           Abstract
%%%%%%%%%%%%%%%%%%%%%%%%%%%%%%%%%%%%

\maketitle
%\tableofcontents

%%%%%%%%%%%%%%%%%%%%%%%%%%%%%%%%%%%%
%           Section ↓
%%%%%%%%%%%%%%%%%%%%%%%%%%%%%%%%%%%%
%\section{}
%%%%%%%%%%%%%%%%%%%%%%%%%%%%%%%%%%%%
%           Section ↑
%%%%%%%%%%%%%%%%%%%%%%%%%%%%%%%%%%%%

%%%%%%%%%%%%%%%%%%%%%%%%%%%%%%%%%%%%
%           Section ↓
%%%%%%%%%%%%%%%%%%%%%%%%%%%%%%%%%%%%
\section{Introduction}
The {\it Tate-Hochschild cohomology} of an algebra was introduced by Wang \cite{Wang_2021} based on the notion of {\it Tate cohomology} defined by Buchweitz \cite{Buch86}. 
It was proved in \cite{Wang_2021} that the Tate-Hochschild cohomology carries a structure of a graded commutative algebra. 
There are studies on the ring structure of the Tate-Hochschild cohomology, such as \cite{DGT19,Nguy12,Usui21,Wang20}.
Recently, Dotsenko, Gélinas and Tamaroff  proved in \cite[Corollary 6.4]{DGT19} that, for a monomial Gorenstein algebra $\L$, the Tate-Hochschild cohomology ring $\hHH^\bullet(\L)$ is isomorphic to $\hHH^{\geq 0}(\L)[\chi^{-1}]$, where $\hHH^{\geq 0}(\Lambda)$ stands for the subring consisting of the non-negative part of $\hHH^{\bullet}(\Lambda)$ and $\chi$ is an invertible homogeneous element of positive degree.
Moreover, the author also showed in \cite[Corollary 3.4]{Usui21} that the same isomorphism holds for a periodic algebra.
In both cases, the invertible element $\chi$ was obtained from the fact that any minimal projective resolution of the given algebra  eventually becomes periodic.

In this paper,  we first deal with eventually periodic algebras (i.e.\ algebras $\L$ with the $n$-th syzygy $\Omega_{\Le}^{n}(\L)$ periodic for some $n \geq 0$).  
It will be revealed that eventually periodic algebras are not necessarily Gorenstein (see Example \ref{example_2}), although it is known that periodic algebras are all Gorenstein.
Secondly, we will study the relationship between the eventual periodicity of a Gorenstein algebra and the Tate-Hochschild cohomology ring of the algebra.
The following is the main result of this paper:

%
%%%%%%%%%%%%%%%      statement     %%%%%%%%%%%%%%%%%%%
%
\begin{main}[see Theorem \ref{claim_6}]
Let $\Lambda$ be a Gorenstein algebra.
Then the following are equivalent.
\begin{enumerate}
    \item $\L$ is an eventually periodic algebra.
    \item The Tate-Hochschild cohomology ring $\hHH^{\bullet}(\L)$ has an invertible homogeneous element of positive degree.
\end{enumerate} 
In this case, there exists an isomorphism $\hHH^{\bullet}(\L) \cong \hHH^{\geq 0}(\L)[\chi^{-1}]$ of graded algebras, where the degree of an invertible homogeneous element $\chi$ equals the period of the periodic syzygy $\Omega_{\Le}^{n}(\L)$ of $\L$ for some $n \geq 0$.
\end{main}
%%%%%%%%%%%%%%%%%%%%%%%%%%%%%%%%%%%%%%%%%%%%%%%%%%%%%%

Our main result requires only the eventual periodicity of a minimal projective resolution of a given Gorenstein algebra.
Hence it turns out that \cite[Corollary 6.4]{DGT19} and \cite[Corollary 3.4]{Usui21} can be obtained from our main result,
because  monomial Gorenstein algebras and periodic algebras are both eventually periodic Gorenstein algebras.
Finally, using tensor algebras, we will provide one of the constructions of eventually periodic Gorenstein algebras.

This paper is organized as follows.
%%%
%%%　　 
%%%
In Section 2, we recall basic facts on  Tate cohomology and Gorenstein algebras.
%%%
%%%　　 
%%%
In Section 3, we give examples of eventually periodic algebras and prove our main result.
%%%
%%%　　 
%%%
In Section 4, we establish a way to construct eventually periodic Gorenstein algebras.

%%%%%%%%%%%%%%%%%%%%%%%%%%%%%%%%%%%%
%           Section ↑
%%%%%%%%%%%%%%%%%%%%%%%%%%%%%%%%%%%%

%%%%%%%%%%%%%%%%%%%%%%%%%%%%%%%%%%%%
%           Section ↓
%%%%%%%%%%%%%%%%%%%%%%%%%%%%%%%%%%%%
\section{Preliminaries}

Throughout this paper, let $k$ be an algebraically closed field.
We write $\otimes_k$ as $\otimes$.
By an algebra $\L$, we mean a finite dimensional associative unital $k$-algebra. 
All modules are assumed to be finitely generated left modules. 
For an algebra $\L$, we denote by $\L\mod$ the category of $\L$-modules, by $\L\proj$ the category of projective $\L$-modules, by $\gldim\L$ the global dimension of $\L$ and by $\Le$ the enveloping algebra $\L \otimes \L^{\rm op}$.
Remark that we can identify $\Le$-modules with $\L$-bimodules.
For a $\L$-module $M$, we denote by $\injdim_{\L}M$ (resp., $\projdim_{\L}M$) the injective (resp., projective) dimension of $M$. 
By a complex $X_\bullet$, we mean a chain complex
\[
X_\bullet\ = \ \cdots \rightarrow X_{i+1} \xrightarrow{d_{i+1}^{X}} X_{i} \rightarrow \cdots.
\]
For a complex $X_\bullet$ and an integer $i$, we denote by $\Omega_{i}(X_\bullet)$ the cokernel $\Coker d^X_{i+1}$ of the differential $d^X_{i+1}$ and by $X_\bullet[i]$ the complex given by $(X_\bullet[i])_j = X_{j-i}$ and  $d^{X[i]} = (-1)^{i}d^{X}$.

%%%%%%%%%%%%%%%%%%%%%%%%%%%%%%%%%%%%%%%%%%%%%%%%%%
\subsection{Tate cohomology rings}
%%%%%%%%%%%%%%%%%%%%%%%%%%%%%%%%%%%%%%%%%%%%%%%%%%

In this subsection, we recall some facts on Tate cohomology rings and Tate-Hochschild cohomology rings.
Let $\L$ be an algebra. Recall that the \textit{singularity category} $\D_{\rm sg}(\Lambda)$ of $\L$ is defined to be the Verdier quotient of the bounded derived category $\D^{\rm b}(\Lambda\mod)$ of $\Lambda\mod$ by the bounded homotopy category $\K^{\rm b}(\Lambda$-$\mathrm{proj})$ of $\Lambda$-$\mathrm{proj}$.
Let $M$ and $N$ be $\L$-modules and $i$ an integer.
Following \cite{Buch86}, we define the \textit{$i$-th Tate cohomology group of  $M$ with coefficients in  $N$} by
\[
    \hExt_{\Lambda}^{i}(M, N)     :=
    \Hom_{\D_{\rm sg}(\Lambda)}(M, N[i]),
\]
where $M$ and $N$ are viewed as complexes concentrated in degree $0$.
We call $\hExt_{\Le}^{i}(\Lambda, \Lambda)$ the \textit{$i$-th Tate-Hochschild cohomology group} of $\Lambda$ and denote it by $\hHH^{i}(\Lambda)$.

Let $\T$ be a triangulated category with shift functor $[1]$. For an object $X$ of $\T$, one can endow $\End_{\T}^{\bullet}(X):= \bigoplus_{i \in \mathbb{Z}} \Hom_{\T}(X, X[i])$ with a structure of a graded ring.
The multiplication is given by the \textit{Yoneda product}
\begin{align*}
     \smile: \Hom_{\T}(X, X[i]) \otimes \Hom_{\T}(X, X[j]) \rightarrow \Hom_{\T}(X, X[i+j]) 
\end{align*}
sending $\alpha \otimes \beta$ to $\alpha [j] \circ \beta.$
If $\T = \D_{\rm sg}(\L)$ and $X = M \in \L\mod$, then we obtain a graded algebra $\hExt_{\Lambda}^{\bullet}(M, M) :=$  $\End_{\D_{\rm sg}(\L)}^{\bullet}(M)$ and call it the \textit{Tate cohomology ring} of $M$, which is called the {\it stabilized Yoneda Ext algebra} of $M$ by Buchweitz \cite{Buch86}.
It was proved by Wang \cite{Wang_2021} 
that the {\it Tate-Hochschild cohomology ring}  $\hHH^{\bullet}(\Lambda) := \hExt_{\Le}^{\bullet}(\Lambda, \Lambda)$ of any algebra $\Lambda$ is a graded commutative algebra.

%%%%%%%%%%%%%%%%%%%%%%%%%%%%%%%%%%%%%%%%%%%%%%%%%%
\subsection{Singularity categories of Gorenstein algebras}
%%%%%%%%%%%%%%%%%%%%%%%%%%%%%%%%%%%%%%%%%%%%%%%%%%
The aim of this subsection is to recall facts on the singularity category of a Gorenstein algebra from \cite{Buch86}.
Let $\L$ be an algebra.
Recall that the {\it stable category} $\L\smod$ of $\L$-modules is the category whose objects are the same as $\L\mod$ and morphisms are given by
\[\underline{\Hom}_{\L}(M, N) := \Hom_{\L}(M, N)/\mathcal{P}(M, N),\]
where $\mathcal{P}(M, N)$ is the space of morphisms factoring through a projective module.
We denote by $[f]$ the element of $\underline{\Hom}_{\Lambda}(M, N)$ represented by a morphism $f: M \rightarrow N$.
There exists a canonical functor $F:\L\smod$ $\rightarrow$ $\D_{\rm sg}(\L)$ making the following square commute:
\[\xymatrix@=15pt{
\L\mbox{-}\mathrm{mod} \ar[r] \ar[d] &  \D^{\rm b}(\L\mbox{-}\mathrm{mod}) \ar[d] \\
\L\mbox{-}\underline{\mathrm{mod}} \ar[r]^-{F} & \D_{\rm sg}(\L)
}\]
where the two vertical functors are the canonical ones, and the upper horizontal functor is the one sending a module $M$ to the complex $M$ concentrated in degree $0$.
Further, the functor $F$ satisfies $F \circ \Omega_{\Lambda} \cong [-1] \circ F$, where $\Omega_{\L}$ is the syzygy functor on $\L\smod$ (i.e.\ the functor sending a module $M$ to the kernel of a projective cover of $M$). 
On the other hand, let $\APC(\L)$ be the homotopy category of acyclic complexes of projective $\L$-modules.
Then taking the cokernel $\Omega_{0}(X_\bullet) = \Coker d_1^X$ of the differential $d_1^X$ for a complex $X_\bullet$ 
defines a functor $\Omega_{0}:\APC(\L) \rightarrow \L\smod$ 
satisfying $\Omega_{0}\circ[-1] \cong \Omega_{\L} \circ \Omega_{0}$.

Recall that an algebra $\L$ is {\it Gorenstein} if $\injdim_{\L}\L < \infty$ and $\injdim_{\L^{\rm op}}\L < \infty$. 
Since the two dimensions coincide (see \cite[Lemma A]{Zaks69}), 
we call a Gorenstein algebra $\L$ with $\injdim_{\L}\L = d$ a {\it $d$-Gorenstein algebra}.
In the rest of this subsection, let $\L$ denote a Gorenstein algebra.
We call a $\L$-module $M$  {\it Cohen-Macaulay} if $\Ext_\L^i(M, \L) = 0$ for all $i > 0$. 
It is clear that projective $\L$-modules are Cohen-Macaulay.\ We denote by $\CM(\L)$ the category of Cohen-Macaulay $\L$-modules. 
It is well-known that $\CM(\L)$ is a Frobenius exact category whose projective objects are precisely projective $\L$-modules, so that the stable category $\sCM(\L)$, the full subcategory of $\L\smod$ consisting of Cohen-Macaulay $\L$-modules, 
carries a structure of a triangulated category (see \cite{Buch86,HappBook}).
In particular, the syzygy functor $\Omega_{\L}$ on $\L\smod$ gives rise to the inverse of the shift functor $\Sigma$ on  $\sCM(\L)$.
We end this subsection with the following result due to Buchweitz.

%
%%%%%%%%%%%%%%%      statement     %%%%%%%%%%%%%%%%%%%
%
\begin{theo}[{\cite[Theorem 4.4.1]{Buch86}}] \label{claim_9}
Let $\L$ be a Gorenstein algebra.
Then there exist equivalences of triangulated categories
\begin{align*}
\xymatrix{\APC(\L) \ar[r]^-{\Omega_0} & \sCM(\L)  \ar[r]^-{\iota_{\L}} &  \D_{\rm sg}(\L),
}
\end{align*}
where the equivalence $\iota_{\L}$ is given by the restriction of $F:\L\smod$ $\rightarrow$ $\D_{\rm sg}(\L)$ to $\sCM(\L)$.
\end{theo}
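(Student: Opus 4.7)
The plan is to establish each equivalence by separately verifying that the functor is well-defined on objects, essentially surjective, fully faithful, and compatible with the triangulated structure, with the Gorenstein hypothesis entering at precisely the points where it is needed.

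For $\Omega_0 : \APC(\L) \to \sCM(\L)$, I would first check that each syzygy $\Omega_i(P_\bullet)$ of an acyclic complex of projectives is Cohen-Macaulay. The short exact sequences $0 \to \Omega_i(P_\bullet) \to P_{i-1} \to \Omega_{i-1}(P_\bullet) \to 0$ exhibit a projective resolution of $\Omega_0(P_\bullet)$, and the key step is showing that over a Gorenstein algebra every acyclic complex of projectives is totally acyclic, i.e.\ $\Hom_\L(P_\bullet, \L)$ is acyclic as well; this follows by a dimension-shifting argument using $\injdim_\L \L < \infty$, and gives the vanishing $\Ext_\L^{>0}(\Omega_0(P_\bullet), \L) = 0$. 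Essential surjectivity is handled by splicing, for $M \in \CM(\L)$, a projective resolution of $M$ with the $\Hom_\L(-,\L)$-dual of a projective resolution of $\Hom_\L(M,\L)$ over $\L^{\rm op}$, using that the natural map $M \to \Hom_{\L^{\rm op}}(\Hom_\L(M,\L),\L)$ is an isomorphism for Cohen-Macaulay $M$. Full faithfulness is then the standard comparison theorem for complete resolutions: morphisms lift to chain maps unique up to chain homotopy, which on $\Omega_0$ exactly matches morphisms modulo those factoring through projectives.

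For $\iota_\L : \sCM(\L) \to \D_{\rm sg}(\L)$, essential surjectivity uses the Gorenstein assumption decisively. Given $X^\bullet \in \D^{\rm b}(\L\mod)$, after truncating a projective resolution, the $d$-th syzygy of each cohomology module (with $d = \injdim_\L \L$) lies in $\CM(\L)$; iterating and using that perfect complexes become zero in $\D_{\rm sg}(\L)$ shows $X^\bullet$ is isomorphic to a shift of a Cohen-Macaulay module. For full faithfulness, one must show that the canonical map $\underline{\Hom}_\L(M,N) \to \Hom_{\D_{\rm sg}(\L)}(M,N)$ is bijective for $M, N \in \CM(\L)$. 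Injectivity follows because a morphism becoming zero in the Verdier quotient factors through a perfect complex, and composing with the CM module $N$ forces the factorization to go through a projective direct summand. Surjectivity is the delicate point: any roof $M \leftarrow Z^\bullet \to N$ in the Verdier quotient must be straightened to a genuine map $M \to N$, which is accomplished by replacing $Z^\bullet$ with a CM approximation obtained from its projective resolution together with the finite injective dimension of $\L$.

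Compatibility with triangles in both cases is formal: for $\Omega_0$ it comes from the already-noted relation $\Omega_0 \circ [-1] \cong \Omega_\L \circ \Omega_0$ and the fact that short exact sequences of complete resolutions produce distinguished triangles; for $\iota_\L$ it follows from $\Sigma = \Omega_\L^{-1}$ on $\sCM(\L)$ together with the standard identification of triangles in $\sCM(\L)$ with the images of short exact sequences in $\CM(\L)$. The main obstacle I anticipate is the surjectivity half of full faithfulness of $\iota_\L$, where one has to genuinely use the Gorenstein hypothesis to replace an intermediate complex in a Verdier roof by a CM module without changing the morphism it represents; without finite injective dimension of $\L$ there are simply not enough Cohen-Macaulay modules available to absorb the resulting cones, and the argument collapses.
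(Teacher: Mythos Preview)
The paper does not give its own proof of this statement: Theorem~\ref{claim_9} is quoted verbatim as Buchweitz's result \cite[Theorem 4.4.1]{Buch86} and is used as a black box throughout, so there is nothing in the paper to compare your proposal against.

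That said, your outline is a faithful sketch of the standard route to Buchweitz's theorem. The pieces you identify---total acyclicity of acyclic complexes of projectives over a Gorenstein ring, the splicing construction of complete resolutions for Cohen--Macaulay modules, the comparison lemma for lifts to complete resolutions, and the reduction of arbitrary bounded complexes to shifts of Cohen--Macaulay modules via high syzygies---are exactly the ingredients of Buchweitz's original argument. Your self-assessment that the surjectivity half of full faithfulness of $\iota_\L$ is the most delicate point is accurate; in practice this is usually handled not by straightening roofs directly but by first proving the composite $\sCM(\L) \to \D_{\rm sg}(\L)$ induces isomorphisms $\underline{\Hom}_\L(M,N) \cong \varinjlim_n \underline{\Hom}_\L(\Omega_\L^n M, \Omega_\L^n N)$ on the source side and identifying the target side with the same colimit, which avoids manipulating Verdier roofs explicitly. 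Your sketch of injectivity (``composing with the CM module $N$ forces the factorization to go through a projective direct summand'') is a bit compressed and would need more care in a full proof, but the overall architecture is sound.
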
 
%%%%%%%%%%%%%%%%%%%%%%%%%%%%%%%%%%%%%%%%%%%%%%%%%%%%%%

%%%%%%%%%%%%%%%%%%%%%%%%%%%%%%%%%%%%%%%%%%%%%%%%%%
\subsection{Tate cohomology over Gorenstein algebras} \label{sec_1}
%%%%%%%%%%%%%%%%%%%%%%%%%%%%%%%%%%%%%%%%%%%%%%%%%%
This subsection is devoted to recalling another description of Tate cohomology over a Gorenstein algebra. 
Throughout, let $\L$ denote a $d$-Gorenstein algebra unless otherwise stated.
Thanks to Theorem \ref{claim_9}, we can associate to any $\L$-module $M$ an object $T_\bullet = T_\bullet^M$ in $\APC(\L)$, uniquely determined up to isomorphism, satisfying that $\Omega_{0}(T_{\bullet}) \cong M$ in $\D_{\rm sg}(\L)$.
Thus the triangle equivalence $\iota_\L: \sCM(\L)\rightarrow \D_{\rm sg}(\L)$ induces an isomorphism
\[
    \hExt_{\Lambda}^{i}(M, M) \cong \underline{\Hom}_{\L}(\Omega_0(T_\bullet), \Sigma^{i}\Omega_0(T_\bullet)) 
\]
for all $i \in \mathbb{Z}$. 
We identify $\hExt_{\Lambda}^{\bullet}(M, M)$ with $\End_{\sCM(\L)}^{\bullet}(\Omega_{0}(T_\bullet))$  via this isomorphism.

Recall that, for an algebra $\L$, the {\it Gorenstein dimension} $\Gdim_{\L}M$ of a $\L$-module $M$ is defined by the shortest length of a resolution of $M$ by $\L$-modules $X$ with $X \cong X^{**}$ and $\Ext_{\L}^{i}(X, \L) =$ $0$ $=$ $\Ext_{\L^{\rm op}}^{i}(X^{*}, \L)$ for all $i >0$, where we set $(-)^{*}:=\Hom_{\L}(-, \L)$ (see \cite{AusBri69} for its original definition).  
The next proposition is easily obtained from the results in \cite{AvraMart02} applied to the case of Gorenstein algebras:  (1), (2) and (3) follow from  \cite[Theorems 3.1 and 3.2]{AvraMart02},  \cite[Lemma 2.4 and Theorem 3.1]{AvraMart02} and \cite[Theorem 5.2]{AvraMart02}, respectively.

%
%%%%%%%%%%%%%%%      statement     %%%%%%%%%%%%%%%%%%%
%
\begin{prop} \label{claim_12}
The following hold for a module $M$ over a  $d$-Gorenstein algebra $\L$.
\begin{enumerate}
\item  The Gorenstein dimension $\Gdim_{\L}M$ of $M$ satisfies $\Gdim_{\L}M \leq d$ and is equal to the smallest integer $r \geq 0$ for which $\Omega_{\L}^{r}(M)$ is Cohen-Macaulay. 

\item There exists a diagram $T_\bullet \xrightarrow{\theta} P_\bullet \xrightarrow{\varepsilon} M$ satisfying the following conditions:
\begin{enumerate}
    \item[(i)] $T_\bullet \in \APC(\L)$ and $P_\bullet \xrightarrow{\varepsilon} M$ is a projective resolution of $M$.
    
    \item[(ii)] $\theta:T_\bullet \rightarrow P_\bullet$ is a chain map with $\theta_i$  an isomorphism for any $i \gg 0$. 
\end{enumerate}

\item We have that $\Ext_{\Lambda}^{i}(M, M)  \cong \hExt_{\Lambda}^{i}(M, M)$ for all $i > \Gdim_{\L}M$.
\end{enumerate}
\end{prop}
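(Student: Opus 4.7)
The plan is to derive all three parts by applying the results of Avramov--Martsinkovsky \cite{AvraMart02} in the Gorenstein setting, where the only additional ingredient is that every module over a $d$-Gorenstein algebra automatically has finite Gorenstein dimension bounded by $d$.

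I would first verify that $\Gdim_{\L}M \leq d$ by considering $N = \Omega_{\L}^{d}(M)$. The dimension-shifting isomorphism $\Ext_{\L}^{i}(N, \L) \cong \Ext_{\L}^{i+d}(M, \L)$ combined with $\injdim_{\L}\L = d$ forces the higher Ext groups of $N$ into $\L$ to vanish; the dual argument on $\L^{\rm op}$, together with the standard fact that over a Gorenstein algebra all sufficiently high syzygies are totally reflexive (equivalently, Cohen-Macaulay in the sense used here), puts $N$ in the G-class. With finiteness of $\Gdim_{\L}M$ established, assertion (1) becomes exactly the content of \cite[Theorems 3.1 and 3.2]{AvraMart02}, which identify $\Gdim_{\L}M$ with the smallest $r \geq 0$ for which $\Omega_{\L}^{r}(M)$ is in the G-class.

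For part (2), I would invoke \cite[Lemma 2.4 and Theorem 3.1]{AvraMart02}, which produces a complete resolution of any module of finite Gorenstein dimension by splicing a chosen projective resolution $P_\bullet \xrightarrow{\varepsilon} M$ with a totally acyclic complex of projectives over the Cohen-Macaulay module $\Omega_{\L}^{r}(M)$, where $r = \Gdim_{\L}M$. The resulting chain map $\theta : T_\bullet \to P_\bullet$ is, by construction, an isomorphism in all degrees $\geq r$, giving the desired diagram.

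Finally, for part (3), I would apply \cite[Theorem 5.2]{AvraMart02}, which yields the comparison $\Ext_{\L}^{i}(M, M) \cong H^{i}(\Hom_{\L}(T_\bullet, M))$ for $i > \Gdim_{\L}M$. It remains to identify the right-hand side with the singularity-category Tate group $\hExt_{\L}^{i}(M, M)$ used in this paper. This is supplied by the isomorphism $\hExt_{\L}^{i}(M, M) \cong \underline{\Hom}_{\L}(\Omega_{0}(T_\bullet), \Sigma^{i}\Omega_{0}(T_\bullet))$ recorded in Subsection~\ref{sec_1}, together with the standard computation of morphisms in $\sCM(\L)$ via $\Hom$ complexes of complete resolutions modulo null-homotopy. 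This last compatibility between the two presentations of Tate cohomology is the step most likely to require explicit verification; everything else is a direct appeal to \cite{AvraMart02}.
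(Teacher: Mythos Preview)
Your proposal is correct and matches the paper's approach exactly: the paper also derives (1), (2), and (3) directly from \cite[Theorems 3.1 and 3.2]{AvraMart02}, \cite[Lemma 2.4 and Theorem 3.1]{AvraMart02}, and \cite[Theorem 5.2]{AvraMart02}, respectively, after noting that the hypotheses of those results are met over a Gorenstein algebra. Your added remarks on why $\Gdim_{\L}M \leq d$ and on the identification of the two Tate-cohomology models simply make explicit what the paper leaves implicit.
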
 
%%%%%%%%%%%%%%%%%%%%%%%%%%%%%%%%%%%%%%%%%%%%%%%%%%%%%%

We call such a diagram as in Proposition \ref{claim_12} (2) a {\it complete resolution} of $M$ (see \cite{AvraMart02} for its definition in a general setting).
A complete resolution is unique in the sense of \cite[Lemma 5.3]{AvraMart02} (when it exists).

Finally, we explain how we find the corresponding object $T_\bullet^M$ in $\APC(\L)$ for any $\L$-module $M$.
Let $T_\bullet \rightarrow P_\bullet \rightarrow M$ be a complete resolution of $M$.  
Then the complex $T_\bullet$ in $\APC(\L)$ is the object corresponding to $M$ via the triangle equivalence $\iota \circ \Omega_0: \APC(\L) \rightarrow \D_{\rm sg}(\L)$.
Indeed, the morphism $\Omega_0(T_\bullet) \rightarrow M$ induced by the chain map $\theta_{\geq 0}:T_{\geq 0}\rightarrow P_\bullet$ is an isomorphism in $\D_{\rm sg}(\L)$.
Here, $T_{\geq 0}$ stands for the following truncated complex of $T_\bullet$: \[T_{\geq 0} = \cdots \rightarrow T_{2} \xrightarrow{d_{2}^T} T_{1} \xrightarrow{d_{1}^T} T_{0} \rightarrow 0 \rightarrow 0 \rightarrow \cdots.\] 
Thus we conclude that 
constructing a complete resolution of $M$ is equivalent to finding the corresponding object $T_\bullet^M$ of $\APC(\L)$. 
%%%%%%%%%%%%%%%%%%%%%%%%%%%%%%%%%%%%
%           Section ↑
%%%%%%%%%%%%%%%%%%%%%%%%%%%%%%%%%%%%

%%%%%%%%%%%%%%%%%%%%%%%%%%%%%%%%%%%%
%           Section ↓
%%%%%%%%%%%%%%%%%%%%%%%%%%%%%%%%%%%% 
\section{Tate-Hochschild cohomology for eventually periodic Gorenstein algebras} \label{sec_2} 

In this section, we first define eventually periodic algebras and provide examples of them. 
We then prove our main result.

%%%%%%%%%%%%%%%%%%%%%%%%%%%%%%%%%%%%
%           Subsection ↓
%%%%%%%%%%%%%%%%%%%%%%%%%%%%%%%%%%%% 
\subsection{Eventually periodic algebras} \label{sec_3}
As mentioned above, let us first define the eventual periodicity of algebras and provide examples of eventually periodic algebras.

%
%%%%%%%%%%%%%%%      statement     %%%%%%%%%%%%%%%%%%%
%
\begin{defi} \label{def_2} {\rm
Let $\L$ be an algebra. 
A $\L$-module $M$ is called {\it periodic} if $\Omega_{\L}^{p}(M) \cong M$ in $\L\mod$ for some $p > 0$. 
The smallest such $p$ is said to be the {\it period} of $M$. 
We say that $M \in \L\mod$ is {\it eventually periodic} if $\Omega_{\L}^{n}(M)$ is periodic for some $n \geq 0$.
An algebra $\L$ is called {\it periodic} $($resp. {\it eventually periodic$)$} if $\L \in \Le\mod$ is periodic $($resp. eventually periodic$)$.
}\end{defi}
%%%%%%%%%%%%%%%%%%%%%%%%%%%%%%%%%%%%%%%%%%%%%%%%%%%%%%

From the definition, periodic algebras are eventually periodic algebras.
Periodic algebras have been studied for a long time (see \cite{ErdSko08}). 
We know from \cite[Lemma 1.5]{GSS03} that periodic algebras are self-injective algebras (i.e.\ $0$-Gorenstein algebras). 
On the other hand, it follows from the proof of \cite[Corollary 6.4]{DGT19} that monomial Gorenstein algebras are eventually periodic algebras.
It also follows from the formula $\gldim\L = \projdim_{\Le}\L$ (see \cite[Section 1.5]{Happel89}) that algebras of finite global dimension are eventually periodic algebras. 
As will be seen in Example \ref{example_2} below, not all eventually periodic algebras are Gorenstein algebras.

%
%%%%%%%%%%%%%%%      example     %%%%%%%%%%%%%%%%%%%
%
\begin{example} \label{example_2}
{\rm 
\begin{enumerate}
  \setlength{\itemsep}{1mm}

\item  
Let $\L_1$ be  the algebra given by a quiver with relation
\begin{center}
\begin{tikzcd}
1  \arrow[r,shift left=.6ex,"\alpha"] & 2 \arrow[l,shift left =.6ex,"\beta"]
\end{tikzcd} \qquad $\alpha \beta \alpha = 0$.
\end{center}
Then $\L_1$ is a monomial algebra that is not Gorenstein (since $\injdim_{\L}\L e_1$ $=$ $\infty$, where $e_1$ is the primitive idempotent corresponding to the vertex $1$).
Using Bardzell's minimal projective resolution of a monomial algebra (see \cite{Bard97}), we have that $\L_1$ is an eventually periodic algebra having $\Omega_{\L_1^{\rm e}}^{2}(\L_1)$ as its first periodic syzygy.

\item
Let $\L_2$ be the algebra given by a quiver with relation
\begin{center}
\begin{tikzcd}
1 \arrow[out=150,in=210,distance=2em,loop,swap,"\alpha"] \arrow[r,"\beta"] & 2
\end{tikzcd} \qquad $\alpha^2 = 0 $.
\end{center}
Then the algebra $\L_2$ is monomial $1$-Gorenstein and hence eventually periodic.  
Bardzell's minimal projective resolution allows us to see that $\Omega_{\L_2^{\rm e}}^{2}(\L_2)$ is the first periodic syzygy of $\L_2$.

\end{enumerate}
}\end{example}

Moreover, one can see that the algebras in \cite[Example 4.3]{X-WChen_2009} are eventually periodic algebras.  

%%%%%%%%%%%%%%%%%%%%%%%%%%%%%%%%%%%%
%           Subsection ↑
%%%%%%%%%%%%%%%%%%%%%%%%%%%%%%%%%%%%

%%%%%%%%%%%%%%%%%%%%%%%%%%%%%%%%%%%%
%           Subsection ↓
%%%%%%%%%%%%%%%%%%%%%%%%%%%%%%%%%%%% 
\subsection{Main Result}

This subsection is devoted to showing our main result.
We prove it after two propositions below.
Before the first one, we prepare some terminology.  
Recall that we write $\Omega_{i}(X_\bullet) = \Coker d^X_{i+1}$ for a complex $X_\bullet$ and $i \in \mathbb{Z}$.
For a module $M$ over a Gorenstein algebra $\L$, its complete resolution $T_\bullet \rightarrow P_\bullet \rightarrow M$ is called {\it periodic} if there exists an  integer $p >0$ such that $\Omega_i(T_\bullet) \cong \Omega_{i+p}(T_\bullet)$ in $\L\mod$ for all $i \in \mathbb{Z}$.
We call the least such $p$ the {\it period} of the complete resolution.

%
%%%%%%%%%%%%%%%      statement     %%%%%%%%%%%%%%%%%%%
%
\begin{prop} \label{claim_11}
Let $\L$ be a Gorenstein algebra and $M$ a $\L$-module.  
If there exists an integer $n \geq 0$ such that $\Omega_{\L}^{n}(M)$ is periodic of period $p$, then $M$ admits a periodic complete resolution of period $p$. 
Further, the period of the periodic complete resolution is independent of the choice of periodic syzygies.
\end{prop}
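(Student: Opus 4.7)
The plan is to build a periodic complete resolution of $M$ explicitly by splicing a periodic portion of the minimal projective resolution of $M$.

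First I would fix the minimal projective resolution $P_\bullet \xrightarrow{\varepsilon} M$ and set $N = \max(n, d)$ where $d = \injdim_\Lambda \Lambda$. Proposition~\ref{claim_12}(1) gives $\Omega_\Lambda^N(M) \in \CM(\Lambda)$, and iterating the hypothesis $\Omega_\Lambda^{n+p}(M) \cong \Omega_\Lambda^n(M)$ under the syzygy functor yields an isomorphism $\psi: \Omega_\Lambda^{N+p}(M) \xrightarrow{\cong} \Omega_\Lambda^N(M)$ in $\Lambda\mod$. I then extract from $P_\bullet$ the periodic block
\[
0 \to \Omega_\Lambda^{N+p}(M) \xrightarrow{\iota} P_{N+p-1} \to \cdots \to P_N \xrightarrow{\pi} \Omega_\Lambda^N(M) \to 0.
\]

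Next I would splice copies of this block in both directions via $\psi$ to produce a doubly infinite complex $T_\bullet$ of projectives. Concretely, set $T_i := P_{N + ((i-N) \bmod p)}$ for every $i \in \mathbb{Z}$, with differentials inherited from $P_\bullet$ within each block and given by the composite $\iota \circ \psi^{-1} \circ \pi : P_N \to P_{N+p-1}$ at the splice points $i \equiv N \pmod p$. Acyclicity is immediate: within a block it is the exactness above, and at a splice point the kernel of the composite equals $\ker \pi = \Im d^P_{N+1}$, matching the image of the incoming differential. The chain map $T_\bullet \to P_\bullet$ that is the identity on $P_N, \ldots, P_{N+p-1}$ and propagated by period $p$ exhibits $T_\bullet \to P_\bullet \to M$ as a complete resolution in the sense of Proposition~\ref{claim_12}(2); by construction $T_\bullet$ satisfies $\Omega_i(T_\bullet) \cong \Omega_{i+p}(T_\bullet)$ for all $i$, hence is periodic of period $p$.

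For the independence statement, I would invoke the uniqueness of complete resolutions recalled after Proposition~\ref{claim_12}: if $\Omega_\Lambda^{n'}(M)$ is another periodic syzygy of period $p'$, the analogous construction produces a periodic complete resolution $T'_\bullet$ with $T_\bullet \cong T'_\bullet$ in $\APC(\Lambda)$, and therefore $\Omega_i(T_\bullet) \cong \Omega_i(T'_\bullet)$ in $\sCM(\Lambda)$. After passing to minimal representatives (which have no projective direct summand) these isomorphisms lift to $\Lambda\mod$, so the period, read off from the isomorphism classes of the cokernels $\Omega_i(T_\bullet)$, is an invariant of $M$.

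I expect the principal technical obstacle to be the verification of acyclicity at the splice points together with the chain-map compatibility of $T_\bullet \to P_\bullet$: both reduce to diagram chases using the exactness of the periodic block and the explicit definitions of $\iota$, $\pi$, and $\psi$.
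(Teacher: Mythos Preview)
Your construction of the spliced complex $T_\bullet$ is essentially the one the paper uses, and your independence argument via uniqueness of complete resolutions is valid (the paper instead observes directly that every syzygy past the first periodic one has the same period). There is, however, a genuine gap in the construction of the chain map $\theta:T_\bullet\to P_\bullet$.

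The phrase ``propagated by period $p$'' only makes sense in degrees $i\geq N$, where $P_\bullet$ itself is (up to the isomorphism $\psi$) periodic because it is the minimal resolution of the periodic module $\Omega_\Lambda^{N}(M)$. In degrees $0\leq i<N$ the complex $P_\bullet$ is \emph{not} periodic, so there is nothing to propagate: you still owe a definition of $\theta_{N-1},\dots,\theta_0$ making the squares commute, and for $i<0$ you must take $\theta_i=0$. Producing $\theta_{i-1}$ from $\theta_i$ is an extension problem: the map $d_i^{P}\theta_i:T_i\to P_{i-1}$ factors through $\Im d_i^{T}\hookrightarrow T_{i-1}$, and you must extend it over the short exact sequence $0\to\Im d_i^{T}\to T_{i-1}\to\Omega_{i-1}(T_\bullet)\to 0$. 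The obstruction lives in $\Ext_\Lambda^{1}(\Omega_{i-1}(T_\bullet),P_{i-1})$, and this is where the Cohen--Macaulay property you secured by taking $N\geq d$ is actually needed: since each $\Omega_j(T_\bullet)$ is Cohen--Macaulay, that Ext group vanishes and the extension exists. This is not a ``diagram chase using exactness of the periodic block''; exactness of the block alone does not kill the obstruction.

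The paper handles this point by first observing that every $\Omega_i(T_\bullet)$ is Cohen--Macaulay (via Theorem~\ref{claim_9}), deducing $\Hom_{\K(\Lambda)}(T_\bullet,\Lambda[i])=0$ for all $i$, and then invoking a standard lifting lemma to extend the family $\{\id_{T_j}\}_{j\geq n}$ to a chain map on all of $T_\bullet$. You should replace ``propagated by period $p$'' with this argument (or the Ext-vanishing version above); once that is done, your proof goes through.
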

\begin{proof} 
Assume that there exists a minimal projective resolution $P_\bullet \rightarrow M$ satisfying that $\Omega_\L^{n}(M)$ is periodic of period $p$.   
Then, by using the periodicity of $\Omega_\L^n(M)$, we can extend the truncated complex  $P_{\geq n}$ to an (unbounded) complex $T_{\bullet}$ in $\APC(\L)$ having the following properties: 
\begin{enumerate}
    \item[(i)] $T_{\geq n} = P_{\geq n}$.
    \item[(ii)] For each $i \in \mathbb{Z}$, there exists an integer $0 \leq j < p$ such that $\Omega_{i}(T_\bullet) \cong \Omega_\L^{n+j}(M)$.
\end{enumerate}
In particular, one sees that $\Omega_{i}(T_\bullet) \cong \Omega_{i+p}(T_\bullet)$ for all $i \in \mathbb{Z}$. 
Note that one may take $T_\bullet = 0$ if $\projdim_\L M < \infty$.
It follows from Theorem \ref{claim_9} that $\Omega_{i}(T_\bullet) = \Sigma^{-i} \Omega_{0}(T_\bullet)$ is Cohen-Macaulay for each $i \in \mathbb{Z}$, where $\Sigma$ denotes the shift functor on $\sCM(\L)$.
Then it is easily checked that $\Hom_{\K(\L)}(T_\bullet, \L[i])= 0$ for all $i \in \mathbb{Z}$, where $\K(\L)$ is the homotopy category of $\L$-modules.
Hence, as in \cite[Lemma 2.4]{CorKro97}, the family $\{\id_{T_j}\}_{j \geq n}$ can be extended uniquely up to homotopy to a chain map $\theta: T_{\bullet} \rightarrow P_{\bullet}$ with $\theta_{j}$ the identity for all $j \geq n$.
Therefore, the chain map $\theta$ gives rise to the desired complete resolution.
We remark that the period of the resulting complete resolution does not depend on the choice of $n$.
Indeed, if we take the smallest integer $r \geq 0$ such that $\Omega_{\L}^{r}(M)$ is periodic, then, for each $i \geq n$, the module $\Omega_{\L}^{i}(M)$ is periodic and has the same period as $\Omega_{\L}^{r}(M)$.
\end{proof}
%%%%%%%%%%%%%%%%%%%%%%%%%%%%%%%%%%%%%%%%%%%%%%%%%%%%%%

Recall that the Yoneda product of the Tate cohomology ring $\hExt_{\L}^{\bullet}(M, M)$ is denoted by  $\smile$.

%
%%%%%%%%%%%%%%%      statement     %%%%%%%%%%%%%%%%%%%
%
\begin{prop} \label{claim_5}  
Let $\L$ be a Gorenstein algebra and $M$ a $\L$-module.  
Then the following are equivalent.
\begin{enumerate}
    \item $M$ is eventually periodic.
    \item The Tate cohomology ring $\hExt_{\L}^{\bullet}(M, M)$ has an invertible homogeneous element of positive degree.
\end{enumerate} 
\end{prop}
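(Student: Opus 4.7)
\medskip

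The plan is to exploit the identification
\[
\hExt_{\L}^{\bullet}(M,M) \;\cong\; \End^{\bullet}_{\sCM(\L)}(\Omega_0(T_\bullet))
\]
coming from the triangle equivalence $\iota_\L\circ\Omega_0:\APC(\L)\to\D_{\rm sg}(\L)$ of Theorem \ref{claim_9}, where $T_\bullet\to P_\bullet\to M$ is a complete resolution of $M$ (which exists by Proposition \ref{claim_12}). Under this identification, an element of $\hExt_{\L}^{p}(M,M)$ is the same data as a morphism $\Omega_0(T_\bullet)\to\Sigma^{p}\Omega_0(T_\bullet)$ in $\sCM(\L)$, and such an element is invertible in the Tate cohomology ring if and only if the corresponding morphism is an isomorphism. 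I may assume $M$ has infinite Gorenstein dimension (otherwise $T_\bullet=0$, so $\hExt_{\L}^{\bullet}(M,M)=0$ contains no invertible positive-degree element, and $\Omega_{\L}^{n}(M)$ is projective rather than periodic for $n\gg 0$).

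For $(1)\Rightarrow(2)$, I would invoke Proposition \ref{claim_11}: assuming $\Omega_{\L}^{n}(M)$ is periodic of period $p$, the module $M$ admits a periodic complete resolution $T_\bullet$ of period $p$. By definition $\Omega_0(T_\bullet)\cong\Omega_p(T_\bullet)=\Sigma^{-p}\Omega_0(T_\bullet)$ in $\sCM(\L)$, hence equivalently $\Omega_0(T_\bullet)\cong\Sigma^{p}\Omega_0(T_\bullet)$. Any such isomorphism, together with its inverse, provides mutually inverse homogeneous elements in $\hExt_{\L}^{p}(M,M)$ and $\hExt_{\L}^{-p}(M,M)$ whose Yoneda products collapse to the identity $[1]\in \hExt_{\L}^{0}(M,M)$, producing the required invertible element of positive degree.

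For $(2)\Rightarrow(1)$, suppose $\chi\in\hExt_{\L}^{p}(M,M)$ is invertible with inverse in $\hExt_{\L}^{-p}(M,M)$. Then $\chi$ corresponds to an isomorphism $\Omega_0(T_\bullet)\xrightarrow{\;\sim\;}\Sigma^{p}\Omega_0(T_\bullet)$ in $\sCM(\L)$. Applying the functor $\Sigma^{-i}$ for each $i\in\mathbb{Z}$ and using $\Omega_i(T_\bullet)=\Sigma^{-i}\Omega_0(T_\bullet)$ yields
\[
\Omega_i(T_\bullet)\;\cong\;\Omega_{i-p}(T_\bullet) \qquad\text{in }\sCM(\L),\text{ for all }i\in\mathbb{Z}.
\]
Choose $P_\bullet$ to be the minimal projective resolution of $M$; by the definition of a complete resolution, $\theta_j$ is an isomorphism for all $j\geq n$ for some $n$, so $\Omega_j(T_\bullet)\cong \Omega_{\L}^{j}(M)$ in $\L\mod$ for $j\geq n$. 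In particular $\Omega_{\L}^{n}(M)\cong\Omega_{\L}^{n+p}(M)$ in $\sCM(\L)$.

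The delicate step, which I expect to be the main obstacle, is upgrading this stable isomorphism to an honest isomorphism in $\L\mod$; this is what forces me to use the \emph{minimal} resolution. The point is the Krull--Schmidt property of $\L\mod$: two Cohen--Macaulay modules without nonzero projective summands that are isomorphic in $\sCM(\L)$ are already isomorphic in $\L\mod$. Since $P_\bullet$ is minimal, every syzygy $\Omega_{\L}^{j}(M)$ with $j\geq 1$ has no nonzero projective summand, and it is Cohen--Macaulay for $j\geq \Gdim_{\L}M$ by Proposition \ref{claim_12}(1). Replacing $n$ by $\max(n,\Gdim_{\L}M,1)$ if necessary, both $\Omega_{\L}^{n}(M)$ and $\Omega_{\L}^{n+p}(M)$ satisfy these hypotheses, so $\Omega_{\L}^{n}(M)\cong\Omega_{\L}^{n+p}(M)$ in $\L\mod$, which shows that $M$ is eventually periodic and completes the argument.
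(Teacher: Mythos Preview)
Your overall strategy matches the paper's: both directions go through the identification $\hExt_\L^\bullet(M,M)\cong\End^\bullet_{\sCM(\L)}(\Omega_0(T_\bullet))$, and $(1)\Rightarrow(2)$ is handled identically via Proposition~\ref{claim_11}. Two points deserve correction.

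First, a terminological slip: over a Gorenstein algebra every module has \emph{finite} Gorenstein dimension (Proposition~\ref{claim_12}(1)), so your phrase ``infinite Gorenstein dimension'' should read ``infinite projective dimension''. (Your parenthetical treatment of the finite case is also slightly off: when $\projdim_\L M<\infty$ the zero module is periodic and the zero ring does contain an invertible element of any degree, so both (1) and (2) hold trivially.)

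Second, and more substantively, in $(2)\Rightarrow(1)$ your claim that minimality of $P_\bullet$ forces $\Omega_\L^j(M)$ to have no nonzero projective summand for all $j\geq 1$ is false in general. For instance, if $M=N\oplus L$ with $\projdim_\L N=\infty$ and $\projdim_\L L=1$, then $\Omega_\L^1(M)=\Omega_\L^1(N)\oplus\Omega_\L^1(L)$ and the second summand is nonzero projective; such $L$ exist already for the $1$-Gorenstein algebra $\L_2$ of Example~\ref{example_2}(2) (take $L=P_1/S_2$). Your Krull--Schmidt argument can be repaired---one can show $\Omega_\L^j(M)$ has no projective summand once $j>\Gdim_\L M$, using that projectives are the injective objects of the Frobenius category $\CM(\L)$, so your bound $\max(n,\Gdim_\L M,1)$ is only off by one---but the paper sidesteps the issue entirely. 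From the stable isomorphism one has $\Omega_\L^{m+p}(M)\oplus P\cong\Omega_\L^{m}(M)\oplus Q$ in $\L\mod$ with $P,Q$ projective, and applying the syzygy functor $\Omega_\L$ once more (which annihilates projectives) immediately gives $\Omega_\L^{m+p+1}(M)\cong\Omega_\L^{m+1}(M)$ in $\L\mod$, with no appeal to Krull--Schmidt or to the absence of projective summands.
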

\begin{proof} 
It suffices to prove the statement for $M \in \L\mod$ with $\projdim_\L M =\infty$.
First, we assume that a $\L$-module $M$ satisfies that $\Omega_{\L}^{n}(M)$ is periodic of period $p$ for some $n \geq 0$.
By Proposition \ref{claim_11}, there exists a complete resolution $T_{\bullet} \rightarrow P_{\bullet} \rightarrow M$ such that $\Omega_{0}(T_\bullet)$ is periodic of period $p$, where $p$ is the period of $\Omega_\L^{n}(M)$. 
We fix this complete resolution.  
Then the shift functor $\Sigma$ on $\sCM(\L)$ satisfies $\Sigma^{i} \Omega_{0}(T_\bullet) = \Omega_{-i}(T_\bullet)$ for all $i \in \mathbb{Z}$. 
Let $f \in \Hom_{\L}(\Omega_{p}(T_\bullet),  \Omega_{0}(T_\bullet))$ be an isomorphism and consider two homogeneous elements
\begin{align*}
    x:= \Sigma^{p}[f] \in \hExt_{\Lambda}^{p}(M, M) 
    \quad \mbox{and} \quad 
    y:= [f^{-1}] \in \hExt_{\Lambda}^{-p}(M, M). 
\end{align*} 
Then we have $x \smile y = (\Sigma^{-p}x) \circ y = [f] \circ [f^{-1}] = 1$  and similarly $y \smile x = 1$, where we set $1 := [\id_{\Omega_{0}(T_\bullet)}]$. 

Conversely, we let $T_\bullet \rightarrow P_\bullet \rightarrow M$ be a complete resolution of $M$ and assume that there exists an isomorphism 
\[
x \in \underline{\Hom}_{\L}(\Omega_0(T_\bullet), \Sigma^{p}\Omega_0(T_\bullet)) = \hExt_{\L}^{p}(M, M)
\]
of degree $p >0$.
From the definition of complete resolutions, we have  
\begin{align*}
\underline{\Hom}_{\L}(\Omega_0(T_\bullet), \Sigma^{p}\Omega_0(T_\bullet))
&\cong
\underline{\Hom}_{\L}(\Sigma^{-m-p}\Omega_0(T_\bullet), \Sigma^{-m}\Omega_0(T_\bullet)) \\
&\cong
\underline{\Hom}_{\L}(\Omega_{\L}^{m+p}(M), \Omega_{\L}^{p}(M))
\end{align*}
for some sufficiently large $m >0$.
Hence we get $\Omega_{\L}^{m+p}(M) \cong \Omega_{\L}^{m}(M)$ in $\L\smod$.
This implies that $\Omega_{\L}^{m+p}(M) \oplus P \cong \Omega_{\L}^{m}(M)\oplus Q$ in $\L\mod$ for some $P$ and $Q \in \L\proj$.
By applying the syzygy functor $\Omega_{\L}$ to this isomorphism, we obtain an isomorphism $\Omega_{\L}^{m+p+1}(M) \cong \Omega_{\L}^{m+1}(M) $ in $\L\mod$. 
This completes the proof.
\end{proof}
%%%%%%%%%%%%%%%%%%%%%%%%%%%%%%%%%%%%%%%%%%%%%%%%%%%%%%

Using Proposition \ref{claim_5}, we obtain our main result.

%
%%%%%%%%%%%%%%%      statement     %%%%%%%%%%%%%%%%%%%
%
 
\begin{theo} \label{claim_6}
Let $\Lambda$ be a Gorenstein algebra.
Then the following are equivalent.
\begin{enumerate}
    \item $\L$ is an eventually periodic algebra.
    \item The Tate-Hochschild cohomology ring $\hHH^{\bullet}(\L)$ has an invertible homogeneous element of positive degree.
\end{enumerate} 
In this case, there exists an isomorphism $\hHH^{\bullet}(\L) \cong \hHH^{\geq 0}(\L)[\chi^{-1}]$ of graded algebras, where the degree of an invertible homogeneous element $\chi$ equals the period of the periodic syzygy $\Omega_{\Le}^{n}(\L)$ of $\L$ for some $n \geq 0$.
\end{theo}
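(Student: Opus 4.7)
The plan is to deduce Theorem \ref{claim_6} as a direct specialization of Proposition \ref{claim_5}, taking the Gorenstein algebra to be the enveloping algebra $\Le$ and the module to be $\Lambda$ itself. First I would verify that $\Le$ inherits the Gorenstein property from $\Lambda$: since $\Lambda^{\rm op}$ has the same (finite) self-injective dimension as $\Lambda$ by \cite[Lemma A]{Zaks69}, and since over a field the self-injective dimension of $\Lambda \otimes \Lambda^{\rm op}$ is bounded by the sum of those of the two factors (tensoring injective resolutions of $\Lambda$ and $\Lambda^{\rm op}$ yields an injective resolution of $\Le$), both $\injdim_{\Le}\Le$ and $\injdim_{(\Le)^{\rm op}}\Le$ are finite. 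By Definition \ref{def_2}, eventual periodicity of the algebra $\Lambda$ is by definition eventual periodicity of the $\Le$-module $\Lambda$, while $\hHH^\bullet(\Lambda) = \hExt^\bullet_{\Le}(\Lambda, \Lambda)$ as graded rings; hence the equivalence $(1) \Leftrightarrow (2)$ follows at once from Proposition \ref{claim_5}.

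For the degree assertion, I would simply trace the construction from the proof of Proposition \ref{claim_5}: Proposition \ref{claim_11} produces a periodic complete resolution $T_\bullet \to P_\bullet \to \Lambda$ whose period $p$ equals the period of the periodic syzygy $\Omega^n_{\Le}(\Lambda)$, and from an isomorphism $f: \Omega_p(T_\bullet) \xrightarrow{\sim} \Omega_0(T_\bullet)$ of $\Le$-modules one forms the invertible element $\chi := \Sigma^p[f] \in \hHH^p(\Lambda)$ with inverse $[f^{-1}] \in \hHH^{-p}(\Lambda)$. This records the correct degree automatically.

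To establish the graded isomorphism $\hHH^\bullet(\Lambda) \cong \hHH^{\geq 0}(\Lambda)[\chi^{-1}]$, I would consider the canonical ring map $\varphi$ from the localization to $\hHH^\bullet(\Lambda)$ induced by the inclusion $\hHH^{\geq 0}(\Lambda) \hookrightarrow \hHH^\bullet(\Lambda)$ together with the invertibility of $\chi$. Surjectivity is clear: every homogeneous $y \in \hHH^i(\Lambda)$ with $i < 0$ can be written as $\chi^{-k} \smile (\chi^k \smile y)$ for any $k$ with $kp + i \geq 0$, and $\chi^k \smile y$ then lies in $\hHH^{\geq 0}(\Lambda)$. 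Injectivity is equally direct: if $y\chi^{-k} = 0$ in $\hHH^\bullet(\Lambda)$ for some $y \in \hHH^{\geq 0}(\Lambda)$, multiplying by $\chi^k$ forces $y = 0$, so the class $y/\chi^k$ is already zero in the localization.

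The argument is essentially formal once Propositions \ref{claim_11} and \ref{claim_5} are in hand, so I expect no serious obstacle beyond the routine verification that $\Le$ is Gorenstein. One minor point worth noting in passing is the degenerate case $\gldim\Lambda < \infty$, in which $\hHH^\bullet(\Lambda)$ vanishes and both conditions of the theorem are vacuous; the substantive content thus reduces to the infinite projective dimension setting handled inside the proof of Proposition \ref{claim_5}.
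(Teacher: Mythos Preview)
Your proposal is correct and follows essentially the same approach as the paper: reduce to Proposition \ref{claim_5} by noting that $\Le$ is Gorenstein, then extract the invertible element $\chi$ of the right degree from the proof of Proposition \ref{claim_5}, and finally use invertibility of $\chi$ to identify $\hHH^{\bullet}(\Lambda)$ with the localization. The only cosmetic differences are that the paper cites \cite[Proposition 2.2]{AusRei91} for the Gorenstein property of $\Le$ (where you sketch the tensor-of-injective-resolutions argument) and invokes the graded commutativity of $\hHH^{\bullet}(\Lambda)$ together with \cite[Corollary 3.4]{Usui21} for the localization isomorphism (where you spell out surjectivity and injectivity directly); your explicit localization argument tacitly relies on that same graded commutativity to make $\hHH^{\geq 0}(\Lambda)[\chi^{-1}]$ well-defined as a ring.
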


\begin{proof} 
We know from \cite[Proposition 2.2]{AusRei91} that if $\L$ is a Gorenstein algebra, then so is the enveloping algebra $\Le$. 
Hence the former statement follows from Proposition \ref{claim_5} applied to $\L \in \Le\mod$. 
On the other hand, suppose that the Gorenstein algebra $\L$ satisfies that  $\Omega_{\Le}^{n}(\L)$ is periodic for some $n \geq 0$. 
By the proof of Proposition \ref{claim_5}, there exists an invertible homogeneous element $\chi \in \hHH^{\bullet}(\L)$ whose degree equals the period of the periodic $\Le$-module $\Omega_{\Le}^n(\L)$.
Then the fact that $\hHH^{\bullet}(\L)$ is a graded commutative algebra yields the desired isomorphism of graded algebras (cf.\ the proof of \cite[Corollary 3.4]{Usui21}).
\end{proof}
%%%%%%%%%%%%%%%%%%%%%%%%%%%%%%%%%%%%%%%%%%%%%%%%%%%%%%

We end this subsection with the following three remarks.

%
%%%%%%%%%%%%%%%      remark     %%%%%%%%%%%%%%%%%%%
%
\begin{rem}{\rm
From the definition of singularity categories, an algebra $\L$ has finite projective dimension as a $\Le$-module if and only if its Tate-Hochschild cohomology ring is the zero ring (cf.\ \cite[Section 1]{Buch86}).
Thus Theorem \ref{claim_6} makes essential sense for the case of Gorenstein algebras with infinite global dimension.
}\end{rem}
%%%%%%%%%%%%%%%%%%%%%%%%%%%%%%%%%%%%%%%%%%%%%%%%%%%%%%

%
%%%%%%%%%%%%%%%      remark     %%%%%%%%%%%%%%%%%%%
%
\begin{rem}{\rm
Applying Theorem \ref{claim_6} to  monomial Gorenstein algebras and to periodic algebras, one obtains  \cite[Corollary 6.4]{DGT19} and \cite[Corollary 3.4]{Usui21}, respectively.
}\end{rem}
%%%%%%%%%%%%%%%%%%%%%%%%%%%%%%%%%%%%%%%%%%%%%%%%%%%%%%

%
%%%%%%%%%%%%%%%      remark     %%%%%%%%%%%%%%%%%%%
%
\begin{rem}{\rm  
For an eventually periodic Gorenstein algebra $\L$, 
one can obtain all of the $\dim_{k}\hHH^{*}(\L)$ by using Theorem \ref{claim_6} and the Hochschild cohomology $\HH^\bullet(\L):= \bigoplus_{i\geq 0}\Ext_{\Le}^i(\Lambda, \Lambda)$ of $\L$ (see Example \ref{example_1}).  
However, it is still open how we compute  the ring structure of $\hHH^{\geq 0}(\L)$ (cf.\,\cite[Proposition 3.7]{Usui21}). 
}\end{rem}
%%%%%%%%%%%%%%%%%%%%%%%%%%%%%%%%%%%%%%%%%%%%%%%%%%%%%%

%%%%%%%%%%%%%%%%%%%%%%%%%%%%%%%%%%%%
%           Subsection ↑
%%%%%%%%%%%%%%%%%%%%%%%%%%%%%%%%%%%%

%%%%%%%%%%%%%%%%%%%%%%%%%%%%%%%%%%%%
%           Section ↑
%%%%%%%%%%%%%%%%%%%%%%%%%%%%%%%%%%%%

%%%%%%%%%%%%%%%%%%%%%%%%%%%%%%%%%%%%
%           Section ↓
%%%%%%%%%%%%%%%%%%%%%%%%%%%%%%%%%%%%
\section{Construction of eventually periodic Gorenstein algebras} \label{sec_4}
In this section, we aim at establishing a way of constructing eventually periodic Gorenstein algebras.
First, we show two propositions which will be used latter.
Let us start with the following.

%
%%%%%%%%%%%%%%%      statement     %%%%%%%%%%%%%%%%%%%
%
\begin{prop} \label{claim_1} 
Any periodic $\L$-module $M$ over a $d$-Gorenstein algebra $\L$ is Cohen-Macaulay.
\end{prop}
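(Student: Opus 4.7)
The plan is to verify the defining Cohen--Macaulay condition $\Ext_\L^i(M, \L) = 0$ for all $i > 0$ by combining the periodicity of $M$ with the finiteness of $\injdim_\L \L = d$. First, I would iterate the isomorphism $\Omega_\L^p(M) \cong M$ in $\L\mod$ provided by Definition \ref{def_2}: applying $\Omega_\L$ repeatedly yields $\Omega_\L^{kp}(M) \cong M$ in $\L\mod$ for every $k \geq 0$. Next, standard dimension shifting along a projective resolution of $M$ gives, for all $i \geq 1$ and $n \geq 0$, a natural isomorphism $\Ext_\L^{i+n}(M, \L) \cong \Ext_\L^i(\Omega_\L^n M, \L)$; specialising to $n = kp$ and invoking the periodicity produces
\[
\Ext_\L^{i+kp}(M, \L) \ \cong\ \Ext_\L^i(\Omega_\L^{kp} M, \L) \ \cong\ \Ext_\L^i(M, \L)
\]
for every $i \geq 1$ and $k \geq 0$. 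Since $\injdim_\L \L = d < \infty$ forces $\Ext_\L^j(M, \L) = 0$ whenever $j > d$, choosing $k$ with $i + kp > d$ collapses the left-hand side to zero, giving $\Ext_\L^i(M, \L) = 0$ for all $i \geq 1$, so that $M \in \CM(\L)$.

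A slightly more structural alternative uses Proposition \ref{claim_12}(1) directly: that result provides some $r$ with $0 \leq r \leq d$ such that $\Omega_\L^r(M)$ is Cohen--Macaulay. Choosing $k$ with $kp \geq r$ and writing $M \cong \Omega_\L^{kp}(M) = \Omega_\L^{kp - r}\bigl(\Omega_\L^r(M)\bigr)$ exhibits $M$ as an iterated syzygy of a Cohen--Macaulay module, and the Frobenius exact structure on $\CM(\L)$ recalled in Section 2.2 guarantees that iterated syzygies of Cohen--Macaulay modules remain Cohen--Macaulay. I do not anticipate a serious obstacle here: the argument is essentially a one-line iteration, the only subtle point being that the periodicity in Definition \ref{def_2} is an honest module isomorphism in $\L\mod$ rather than merely a stable one, which is exactly what lets us apply $\Omega_\L$ (or $\Ext_\L^\bullet(-, \L)$) to both sides without loss.
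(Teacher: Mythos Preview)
Your proposal is correct, and your second alternative is essentially the paper's argument: the paper simply observes that $\Omega_{\L}^{i}(M) \in \CM(\L)$ for all $i \geq d$ (citing Buchweitz, which is the content of Proposition~\ref{claim_12}(1)), and then uses periodicity to write $M \cong \Omega_{\L}^{jp}(M)$ for some $j$ with $jp \geq d$, so $M$ is Cohen--Macaulay directly---there is no need to appeal to closure of $\CM(\L)$ under further syzygies. Your first route via dimension shifting on $\Ext_{\L}^{\bullet}(-,\L)$ is a valid and slightly more self-contained variant that bypasses the reference to Proposition~\ref{claim_12} or \cite{Buch86} altogether.
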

\begin{proof} 
Assume that $M$ is a periodic $\L$-module of period $p$.
Since $\Omega_{\L}^{i}(M) \in \CM(\L)$ for $i \geq d$ by \cite[Lemma 4.2.2]{Buch86}, we have that $M \cong \Omega_{\L}^{j p}(M) \in \CM(\L)$ for some $j \gg 0$.
\end{proof}
%%%%%%%%%%%%%%%%%%%%%%%%%%%%%%%%%%%%%%%%%%%%%%%%%%%%%%

We now show that, for an eventually periodic Gorenstein algebra $\L$, the smallest integer $n \geq 0$ satisfying that $\Omega_{\Le}^{n}(\L)$ is periodic has a lower bound.

%
%%%%%%%%%%%%%%%      statement     %%%%%%%%%%%%%%%%%%%
%
\begin{prop} \label{claim_4}
Let $\L$ be a $d$-Gorenstein algebra. 
Assume that there exists an integer $n \geq 0$ such that $\Omega_{\Le}^{n}(\L)$ is periodic.
Then the least such integer $n$ satisfies $n \geq d$.
In particular, an equality holds if and only if there exists a simple $\L$-module $S$ such that $\Ext_{\Lambda}^{n}(S, \L) \not= 0$.
\end{prop}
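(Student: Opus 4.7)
The plan is to transfer the periodicity of $\Omega_{\Le}^{n}(\L)$ to syzygies of simple $\L$-modules via tensor product, and then invoke the classical characterization of $d = \injdim_{\L}\L$ in terms of simple $\L$-modules. Concretely, let $n \geq 0$ be least with $\Omega_{\Le}^{n}(\L)$ periodic of period $p > 0$, and let $P_\bullet \to \L$ be a projective resolution in $\Le\mod$. Since $\Le = \L \otimes \L^{\mathrm{op}}$ is free as a right $\L$-module, each $P_i$ is projective (hence flat) as a right $\L$-module, so for any simple $\L$-module $S$ the complex $P_\bullet \otimes_\L S \to S$ is a $\L$-projective resolution of $S$. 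Comparing against a minimal resolution, one has $\Omega_{\Le}^{n}(\L) \otimes_\L S \cong \Omega_{\L}^{n}(S)$ in $\L\smod$, and the $\Le\mod$-isomorphism $\Omega_{\Le}^{n+p}(\L) \cong \Omega_{\Le}^{n}(\L)$ then yields $\Omega_{\L}^{n+kp}(S) \cong \Omega_{\L}^{n}(S)$ in $\L\smod$ for every $k \geq 0$.

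The next step is a dimension-shift computation: for every $i > 0$,
\[
\Ext^{n+i}_{\L}(S, \L) \cong \Ext^{i}_{\L}(\Omega_{\L}^{n}(S), \L) \cong \Ext^{i}_{\L}(\Omega_{\L}^{n+kp}(S), \L) \cong \Ext^{n+kp+i}_{\L}(S, \L),
\]
and the last term vanishes once $n + kp + i > d$, because $\injdim_{\L}\L = d$. Hence $\Omega_{\L}^{n}(S)$ is Cohen-Macaulay, so by Proposition \ref{claim_12}(1), $\Gdim_{\L}S \leq n$ for every simple $S$. A standard composition-series argument applied to any module witnessing $\injdim_{\L}\L = d$ yields a simple $S_0$ with $\Ext^{d}_{\L}(S_0, \L) \neq 0$; for such $S_0$ one has $\Gdim_{\L}S_0 = d$, and combining this with the previous bound gives $d \leq n$.

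For the equivalence, if $\Ext^{n}_{\L}(S, \L) \neq 0$ for some simple $S$ then $n \leq \injdim_{\L}\L = d$, which combined with $n \geq d$ forces equality. Conversely, when $n = d$ the composition-series argument above directly produces the required simple. The main technical subtlety lies in the opening step: one must verify that tensoring a (possibly non-minimal) $\Le$-projective resolution with $S$ faithfully transfers the $\Le$-periodicity into a genuine $\L\smod$-isomorphism $\Omega_{\L}^{n+p}(S) \cong \Omega_{\L}^{n}(S)$, rather than only a relation in $\D_{\mathrm{sg}}(\L)$; once this is in place, the remainder is routine dimension shifting combined with standard Gorenstein facts.
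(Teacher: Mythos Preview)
Your proposal is correct and follows the same approach as the paper: tensor a bimodule projective resolution of $\L$ with a one-sided module, transfer the periodicity to its syzygies, deduce that the $n$-th syzygy is Cohen-Macaulay, and conclude $n \geq d$. The paper's version is marginally cleaner in that it works with an arbitrary $\L$-module $M$ and with the literal cokernels $\Omega_n(P_\bullet \otimes_\L M)$ in $\L\mod$ (rather than minimal syzygies in $\L\smod$), so that $\Ext_\L^{n+1}(M,\L)=0$ for all $M$ gives $d \leq n$ directly, bypassing both the composition-series step and the $\L\smod$ subtlety you flag at the end.
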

\begin{proof}
Let $\L$ be an eventually periodic Gorenstein algebra and $P_\bullet \rightarrow \L$ a minimal projective resolution of $\L$ over $\Le$ satisfying that $\Omega_{\Le}^{n}(\L)$ is the first periodic syzygy of period $p$.
For any $M \in \L\mod$, an exact sequence $P_\bullet \otimes_\L M \rightarrow \L \otimes_\L M = M$ is a projective resolution of $M$ and has the property that
$\Omega_{n}(P_\bullet \otimes_\L M) = \Omega_{\Le}^{n}(\L) \otimes_\L M \cong \Omega_{\Le}^{n+i p}(\L) \otimes_\L M= \Omega_{n+i p}(P_\bullet \otimes_\L M)$ 
for all $i \geq 0$.
In particular, as in Proposition \ref{claim_1}, one concludes that $\Omega_{n}(P_\bullet \otimes_\L M)$ is Cohen-Macaulay.  
This implies that $n \geq \injdim_{\L}\L = d$.
Indeed, for any $\L$-module $M$, we have 
$\Ext_{\Lambda}^{n+1}(M, \L) \cong \Ext_{\Lambda}^{1}(\Omega_{n}(P_\bullet \otimes_\L M), \L) = 0. $

For the latter statement, we first suppose that $n=d$.
Then it follows from \cite[Proposition 2.4]{DGT19} that we have $n=\Gdim_{\L} (\L/\mathfrak{r})$, where $\mathfrak{r}$ denotes the Jacobson radical of $\L$. 
This shows that $\Ext_{\Lambda}^{n}(\L/\mathfrak{r}, \L) \not = 0$, so that one obtains the desired simple $\L$-module.
Conversely, assume that $\Ext_{\Lambda}^{n}(S, \L) \not= 0$ for some simple $\L$-module $S$.
Then one concludes that $\Omega_{\L}^{n-1}(S) \not\in \CM(\L)$.
However, since we know that $\Omega_{\L}^{n}(S)$ is Cohen-Macaulay, we have $n= \Gdim_{\L}S$ and hence $n \leq d$.  
Then the proof is completed since $n \geq d$ by the former statement.
\end{proof}
%%%%%%%%%%%%%%%%%%%%%%%%%%%%%%%%%%%%%%%%%%%%%%%%%%%%%%

Now, we recall some facts on projective resolutions for tensor algebras.
Let $\L$ and $\G$ be algebras and $P_\bullet \xrightarrow{\varepsilon_{\L}} \L$ and $Q_\bullet \xrightarrow{\varepsilon_{\G}} \G$ projective resolutions as bimodules.
Then the tensor product $P_\bullet \otimes Q_\bullet  \xrightarrow{\varepsilon_{\L} \otimes\, \varepsilon_{\G}} \L \otimes \G$ is a projective resolution of the tensor algebra $\L \otimes \G$ over $\LGe$ (see \cite[Section X.7]{Maclane}). 
Here, we identify $\LGe$ with $\Le \otimes \Ge$.  
It also follows from \cite[Lemma 6.2]{benson_iyengar_krause_pevtsova_2020} that if both $P_\bullet \rightarrow \L$ and $Q_\bullet \rightarrow \G$ are minimal, then so is $P_\bullet \otimes Q_\bullet \rightarrow \LG$.

From now on, we assume that $\L$ is a periodic algebra of period $p$ and that $\G$ is an algebra of finite global dimension $n$. 
Set  $A := \LG$.
Since periodic algebras are self-injective algebras, it follows from \cite[Lemma 6.1]{benson_iyengar_krause_pevtsova_2020} that we have 
\[\injdim\,A = \injdim\,\L + \injdim\,\G = 0 + n = n \]
as one-sided modules.
Thus $A$ is an $n$-Gorenstein algebra.  
Note that the same lemma also implies that the enveloping algebra $A^{\rm e}$ is a $(2n)$-Gorenstein algebra.
We now show that the algebra $A$ has an eventually periodic minimal projective resolution.

%
%%%%%%%%%%%%%%%      statement     %%%%%%%%%%%%%%%%%%%
%
\begin{prop} \label{claim_7} 
Let $\L$ and $\G$ be as above. 
Then $A = \LG$ is an eventually periodic $n$-Gorenstein algebra having $\Omega_{\Ae}^{n}(A)$ as its first periodic syzygy. 
\end{prop}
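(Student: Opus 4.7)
The plan is to construct an explicit minimal projective bimodule resolution of $A = \Lambda \otimes \Gamma$ as the tensor product of the resolutions of its factors, then read off the eventual periodicity from its shape. Let $P_\bullet \to \Lambda$ be a minimal projective resolution over $\Lambda^{\rm e}$ and $Q_\bullet \to \Gamma$ be a minimal projective resolution over $\Gamma^{\rm e}$. By the facts recalled just before the proposition, $R_\bullet := P_\bullet \otimes Q_\bullet \to A$ is then a minimal projective resolution over $A^{\rm e}$. Two features of these factor resolutions drive the argument. Since $\gldim \Gamma = n$, the complex $Q_\bullet$ is bounded with $Q_k = 0$ for $k > n$. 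Since $\Lambda$ is periodic of period $p$, the isomorphism $\Omega_{\Lambda^{\rm e}}^p(\Lambda) \cong \Lambda$, combined with the uniqueness of minimal resolutions, promotes to an isomorphism of chain complexes: compatible isomorphisms $P_{j+p} \cong P_j$ that intertwine the differentials for every $j \geq 0$.

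For any $i \geq n$, the direct sum decomposition
\[
R_i \;=\; \bigoplus_{k=0}^{n} P_{i-k} \otimes Q_k
\]
is a finite sum, and the Leibniz-rule differential on $R_\bullet$ restricts to the truncation $R_{\geq n}$ using only these summands. Applying the periodicity isomorphisms of $P_\bullet$ termwise yields an isomorphism of complexes $R_{\geq n+p} \cong R_{\geq n}$ (after an index shift by $p$). Since both sides are the minimal projective resolutions of $\Omega_{A^{\rm e}}^{n+p}(A)$ and $\Omega_{A^{\rm e}}^{n}(A)$ respectively, taking the cokernel at the top step yields $\Omega_{A^{\rm e}}^{n+p}(A) \cong \Omega_{A^{\rm e}}^{n}(A)$, so $\Omega_{A^{\rm e}}^n(A)$ is periodic with period dividing $p$.

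Finally, to see that $n$ is in fact the smallest index at which periodicity sets in, I would invoke Proposition \ref{claim_4}: since $A$ has already been observed to be $n$-Gorenstein, any integer $m \geq 0$ for which $\Omega_{A^{\rm e}}^m(A)$ is periodic must satisfy $m \geq n$. Combined with the previous paragraph this forces $n$ to be exactly the smallest such index, so $\Omega_{A^{\rm e}}^n(A)$ is the first periodic syzygy, as claimed.

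The main obstacle is the second step, namely checking that the periodicity of $P_\bullet$ is not merely a termwise isomorphism but a genuine isomorphism of complexes that is compatible with the tensor product differentials. This rests on the fact that the module isomorphism $\Omega_{\Lambda^{\rm e}}^p(\Lambda) \cong \Lambda$ lifts (by comparison of minimal resolutions) to a chain isomorphism of the two shifted minimal resolutions, after which the Leibniz rule propagates compatibility to $R_\bullet$ automatically; once this is spelled out, the rest of the argument is essentially bookkeeping.
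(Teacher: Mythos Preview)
Your approach coincides with the paper's: form the minimal resolution $R_\bullet = P_\bullet \otimes Q_\bullet$, observe that for $i \geq n$ each $R_i$ is the finite sum $\bigoplus_{k=0}^{n} P_{i-k}\otimes Q_k$, use the periodicity of $P_\bullet$ to identify $R_{\geq n+p}$ with $R_{\geq n}$, and then invoke Proposition~\ref{claim_4} to see that $n$ is the least index at which periodicity begins.

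There is, however, one point where your sketch is too optimistic. You assert that once the chain isomorphism $\phi_\bullet\colon P_{\bullet+p}\to P_\bullet$ is in hand, ``the Leibniz rule propagates compatibility to $R_\bullet$ automatically''. This is only true when $p$ is even. The total differential on the summand $P_a\otimes Q_b$ contains the term $(-1)^{a}\,\id\otimes d^Q$, and under the shift $a\mapsto a+p$ this sign changes by $(-1)^p$; hence for odd $p$ the naive map $\phi\otimes\id$ fails to commute with $d^R$. The paper handles the two parities separately: for $p$ even one obtains a literal equality $d^{P\otimes Q}_{n+p+1}=d^{P\otimes Q}_{n+1}$, whereas for $p$ odd one must interpose the diagonal sign matrix $D$ with $(i,i)$-entry $(-1)^{n+i}$ between $(P_\bullet\otimes Q_\bullet)_{n+p}$ and $(P_\bullet\otimes Q_\bullet)_{n}$ to make the square commute. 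Once this sign correction is inserted, the remainder of your argument goes through unchanged.
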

\begin{proof} 
Let $P_\bullet \rightarrow \L$ and $Q_\bullet \rightarrow \G$ be minimal projective resolutions as bimodules.
Recall that the $r$-th component of the total complex $P_\bullet \otimes Q_\bullet$ with $r \geq 0$ is given by
\begin{align*}
    (P_\bullet \otimes Q_\bullet)_r = \bigoplus_{i = 0}^{r} P_{r-i} \otimes Q_{i}.
\end{align*}
Since $Q_{i} = 0$ for $i > n =\gldim\G = \projdim_{\Ge}\G$, we have
\begin{align*}
    (P_\bullet \otimes Q_\bullet)_r = \bigoplus_{i = 0}^{n} P_{r-i} \otimes Q_{i}
\end{align*}
for all $r \geq n$. 
Moreover, the $(r+1)$-th differential 
\[d_{r+1}^{P\otimes Q}: (P_\bullet \otimes Q_\bullet)_{r+1} \rightarrow (P_\bullet \otimes Q_\bullet)_{r} \quad \mbox{ ($r \geq n$)}\]   
can be written as the square matrix $(\partial^{ij}_{r+1})_{ij}$ of degree $n+1$ whose $(i, j)$-th entry
\[
\partial^{ij}_{r+1}: P_{r+1-(j-1)} \otimes Q_{j-1} \rightarrow P_{r-(i-1)} \otimes Q_{i-1} \quad (1 \leq i, j \leq n+1)
\] 
is given by 
\[
\partial^{ij}_{r+1} 
=\begin{cases} 
d_{r-i+2}^{P} \otimes \id_{Q_{i-1}} & \quad \mbox{if $i =j$;}\\[1mm]
(-1)^{r-i+1} \id_{P_{r-i+1}} \otimes\,d_{i}^{Q} & \quad \mbox{if $j = i+1$;}\\[1mm]
0 & \quad \mbox{otherwise.}
\end{cases}
\] 
We claim that $\Coker d_{n+p+1}^{P\otimes Q} \cong \Coker d_{n+1}^{P\otimes Q}$.
First, suppose that $p$ is even. 
Since $\partial^{ij}_{n+p+1} = \partial^{ij}_{n+1}$ for all  $1\leq i, j \leq n+1$ 
because $p$ is even and  $d_{l}^P = d_{l+p}^P$ for any $l \geq 0$,  we conclude that $d_{n+p+1}^{P\otimes Q} = d_{n+1}^{P\otimes Q}$, which implies the claim.
Now, assume that $p$ is odd.
Consider the isomorphism of $\Ae$-modules between $(P_\bullet \otimes Q_\bullet)_{r}$ and $(P_\bullet \otimes Q_\bullet)_{r+p}$ with $r \geq n$ induced by the diagonal matrix $D$ of degree $n+1$ whose $(i,i)$-th entry is $(-1)^{n+i}$. 
Together with the fact that $p+1$ is even, a direct calculation shows that there exists a commutative diagram of $\Ae$-modules with exact rows
\[\xymatrix{
(P_\bullet\otimes Q_\bullet)_{n+p+1} \ar[r]^-{d_{n+p+1}^{P \otimes Q}} \ar[d]^-{D}_-{\cong} & (P_\bullet \otimes Q_\bullet)_{n+p} \ar[r] \ar[d]^-{D}_-{\cong} & \Coker d_{n+p+1}^{P \otimes Q} \ar[r] & 0\\
(P_\bullet \otimes Q_\bullet)_{n+1} \ar[r]^-{d_{n+1}^{P\otimes Q}}  & (P_\bullet \otimes Q_\bullet)_{n} \ar[r]  & \Coker d_{n+1}^{P \otimes Q} \ar[r] & 0
}\]
This implies the claim.
Since the projective resolution $P_\bullet \otimes Q_\bullet \rightarrow A$ is minimal, we have that
$
\Omega_{\Ae}^{n+p}(A) = \Coker d_{n+p+1}^{P\otimes Q} \cong \Coker d_{n+1}^{P\otimes Q} = \Omega_{\Ae}^{n}(A)$. 
From Proposition \ref{claim_4} and the isomorphism, we see that the $n$-th syzygy $\Omega_{\Ae}^{n}(A)$ is the first periodic syzygy of $A$. 
\end{proof}
%%%%%%%%%%%%%%%%%%%%%%%%%%%%%%%%%%%%%%%%%%%%%%%%%%%%%%

%
%%%%%%%%%%%%%%%      remark     %%%%%%%%%%%%%%%%%%%
%
\begin{rem} \label{rem_1} {\rm 
Proposition \ref{claim_12} allows us to get $\Gdim_{\Ae}A \leq 2n= \injdim_{\Ae}\Ae$ and hence $\HH^i(A) \cong \hHH^i(A)$ for all $i > 2n$. 
On the other hand, the $i$-th syzygy $\Omega_{\Ae}^{i}(A)$ of $A$ is Cohen-Macaulay for any $i \geq n $ by Propositions \ref{claim_1} and \ref{claim_7}.
Again, Proposition \ref{claim_12} yields that $\Gdim_{\Ae}A \leq n$.
One of the advantages of this observation is that there exists an isomorphism $\HH^i(A) \cong \hHH^i(A)$ for all $i > n$. 
}\end{rem}
%%%%%%%%%%%%%%%%%%%%%%%%%%%%%%%%%%%%%%%%%%%%%%%%%%%%%%

%
%%%%%%%%%%%%%%%      remark     %%%%%%%%%%%%%%%%%%%
%
\begin{rem} \label{rem_2} {\rm
It follows from Theorem \ref{claim_6} and the proof of Proposition \ref{claim_7}  that  the Tate-Hochschild cohomology ring $\hHH^{\bullet}(A)$ of $A$ is of the form $\hHH^{\geq 0}(A)[\chi^{-1}]$, where the degree of $\chi$ divides the period $p$ of $\L$.
We hope to address the degree of $\chi$ in a future paper. 
}\end{rem}
%%%%%%%%%%%%%%%%%%%%%%%%%%%%%%%%%%%%%%%%%%%%%%%%%%%%%%

We end this section with the following two examples. 
Note that the tensor algebra $A$ in Example \ref{example_1} can be found in \cite[Example 6.3]{benson_iyengar_krause_pevtsova_2020}.

%
%%%%%%%%%%%%%%%      example     %%%%%%%%%%%%%%%%%%%
%
\begin{example} \label{example_3} {\rm % example 1
For an integer $n \geq 0$, let $\G_n$ be the algebra given by a quiver with relations
\begin{center}
\begin{tikzcd}
0  \arrow[r,"\alpha_0"] & 1 \arrow[r] & \cdots \arrow[r] & n-1 \arrow[r,"\alpha_{n-1}"] & n
\end{tikzcd}
\quad $\alpha_{i+1} \alpha_{i}= 0$ for  $i = 0, \ldots, n-2$.
\end{center}
Then we have $\gldim\G_n = n$.
By Proposition \ref{claim_7}, any periodic algebra $\L$ gives us an eventually periodic $n$-Gorenstein algebra $A = \L \otimes \G_n$ with $\Omega_{A^{\rm e}}^{n}(A)$ the first periodic syzygy of $A$.

}\end{example}
%%%%%%%%%%%%%%%%%%%%%%%%%%%%%%%%%%%%%%%%%%%%%%%%%%%%%%

%
%%%%%%%%%%%%%%%      example     %%%%%%%%%%%%%%%%%%%
%
\begin{example} \label{example_1} {\rm % example 1 
Let $\L =k[x]/(x^2)$ and let $\G$ be the algebra $\G_1$ defined in Example \ref{example_3}.
Thanks to Bardzell's minimal projective resolution, we see that $\L$ is a periodic algebra whose period is equal to $1$ if $\mathrm{char}\,k =2$ and to $2$ otherwise.
On the other hand, the tensor algebra $A = \LG$ is given by the following quiver with relations 
\begin{center}
\begin{tikzcd}
1 \arrow[out=150,in=210,distance=2em,loop,swap,"\alpha"] \arrow[r,"\beta"] & 2 \arrow[in=-30,out=30,distance=2em,loop,"\gamma"]
\end{tikzcd} \qquad \quad  $\alpha^2 =0 = \gamma^2 \quad \mbox{and} \quad \beta \alpha  =  \gamma \beta$.
\end{center} 
Thus we see that $A$ is a (non-monomial) eventually periodic Gorenstein algebra whose first periodic syzygy is $\Omega_{A^{\rm e}}^{1}(A)$. 
Now, we compute $\dim_{k} \hHH^i(A)$ for all $i \in \mathbb{Z}$. 
It follows from \cite[Section 1.6]{Happel89} that the Hochschild cohomology ring $\HH^\bullet(\G)$ is of the form \[\HH^\bullet(\G) = k.\]
According to \cite[Section 5]{BLM2000}, the Hochschild cohomology ring $\HH^\bullet(\L)$ is as follows:
\[
\HH^\bullet(\L) =
\begin{cases} 
k[a_0,\,  a_1]/(a_0^2) & \mbox{if\ } \mathrm{char}\,k = 2;\\[2mm]
k[a_0, \, a_1,\,  a_2]/(a_0^2,\, a_1^2,\, a_0 a_1,\, a_0 a_2 ) & \mbox{if\ } \mathrm{char}\,k \not = 2,
\end{cases} 
\] 
where the index $i$ of a homogeneous element $a_i$ denotes the degree of $a_i$. 
On the other hand, by \cite[Lemma 3.1]{LeZhou14},  there exists an isomorphism of graded algebras
\begin{align*}
    \HH^\bullet(A) \cong \HH^\bullet(\L) \otimes \HH^\bullet(\G) =\HH^\bullet(\L).
\end{align*} 
It follows from the first remark after Proposition \ref{claim_7} that $\HH^i(A) \cong \hHH^i(A)$ for all $i > 1$.
Hence, the fact that $\hHH^*(A) \cong \hHH^{*+p}(A)$ with $p$ the period of $\L$  
(see  the second remark after Proposition \ref{claim_7}) implies that, for any integer $i$, we have
\[
\dim_{k} \hHH^i(A) =
\begin{cases} 
2 & \mbox{if\ } \mathrm{char}\,k = 2;\\[1mm]
1 & \mbox{if\ } \mathrm{char}\,k \not = 2.
\end{cases} 
\] 

}\end{example}
%%%%%%%%%%%%%%%%%%%%%%%%%%%%%%%%%%%%%%%%%%%%%%%%%%%%%%

%%%%%%%%%%%%%%%%%%%%%%%%%%%%%%%%%%%%
%           Section ↑
%%%%%%%%%%%%%%%%%%%%%%%%%%%%%%%%%%%%

%%%%%%%%%%%%%%%%%%%%%%%%%%%%%%%%%%%%
%           Acknowledgments
%%%%%%%%%%%%%%%%%%%%%%%%%%%%%%%%%%%%
\section*{Acknowledgments} 
The author would like to express his appreciation to the referee(s) for valuable suggestions and comments and for pointing out an error in the manuscript.
The author also would like to thank Professor Katsunori Sanada, Professor Ayako Itaba and Professor Tomohiro Itagaki for their tremendous support for the improvement of the manuscript of the paper.
%%%%%%%%%%%%%%%%%%%%%%%%%%%%%%%%%%%%
%           Acknowledgments
%%%%%%%%%%%%%%%%%%%%%%%%%%%%%%%%%%%%

\bibliographystyle{plain}
\bibliography{ref}

\begin{thebibliography}{10}

\bibitem{AusBri69}
M.~Auslander and M.~Bridger.
\newblock {\em Stable module theory}.
\newblock Memoirs of the American Mathematical Society, No. 94. American
  Mathematical Society, Providence, R.I., 1969.

\bibitem{AusRei91}
M.~Auslander and I.~Reiten.
\newblock Cohen-{M}acaulay and {G}orenstein {A}rtin algebras.
\newblock In {\em Representation theory of finite groups and finite-dimensional
  algebras ({B}ielefeld, 1991)}, volume~95 of {\em Progr. Math.}, pages
  221--245. Birkh\"{a}user, Basel, 1991.

\bibitem{AvraMart02}
L.~L. Avramov and A.~Martsinkovsky.
\newblock Absolute, relative, and {T}ate cohomology of modules of finite
  {G}orenstein dimension.
\newblock {\em Proc. London Math. Soc. $(3)$}, 85(2):393--440, 2002.

\bibitem{Bard97}
M.~J. Bardzell.
\newblock The alternating syzygy behavior of monomial algebras.
\newblock {\em J. Algebra}, 188(1):69--89, 1997.

\bibitem{BLM2000}
M.~J. Bardzell, A.~C. Locateli, and E.~N. Marcos.
\newblock On the {H}ochschild cohomology of truncated cycle algebras.
\newblock {\em Comm. Algebra}, 28(3):1615--1639, 2000.

\bibitem{benson_iyengar_krause_pevtsova_2020}
D.~Benson, S.~B. Iyengar, H.~Krause, and J.~Pevtsova.
\newblock Local duality for the singularity category of a finite dimensional
  {G}orenstein algebra.
\newblock {\em Nagoya Math. J.}, page 1–24, 2020.

\bibitem{Buch86}
R.-{O}. Buchweitz.
\newblock Maximal cohen-macaulay modules and {T}ate-cohomology over
  {G}orenstein rings.
\newblock (1986). DOI: http://hdl.handle.net/1807/16682.

\bibitem{X-WChen_2009}
X.-W. Chen.
\newblock Singularity categories, {S}chur functors and triangular matrix rings.
\newblock {\em Algebr. Represent. Theory}, 12(2-5):181--191, 2009.

\bibitem{CorKro97}
J.~Cornick and P.~H. Kropholler.
\newblock On complete resolutions.
\newblock {\em Topology Appl.}, 78(3):235--250, 1997.

\bibitem{DGT19}
V.~Dotsenko, V.~Gélinas, and P.~Tamaroff.
\newblock Finite generation for {H}ochschild cohomology of {G}orenstein
  monomial algebras.
\newblock (2019). arXiv:1909.00487.

\bibitem{ErdSko08}
K.~Erdmann and A.~Skowro\'{n}ski.
\newblock Periodic algebras.
\newblock In {\em Trends in representation theory of algebras and related
  topics}, EMS Ser. Congr. Rep., pages 201--251. Eur. Math. Soc., Z\"{u}rich,
  2008.

\bibitem{GSS03}
E.~L. Green, N.~Snashall, and \O. Solberg.
\newblock The {H}ochschild cohomology ring of a selfinjective algebra of finite
  representation type.
\newblock {\em Proc. Amer. Math. Soc.}, 131(11):3387--3393, 2003.

\bibitem{HappBook}
D.~Happel.
\newblock {\em Triangulated categories in the representation theory of
  finite-dimensional algebras}, volume 119 of {\em London Mathematical Society
  Lecture Note Series}.
\newblock Cambridge University Press, Cambridge, 1988.

\bibitem{Happel89}
D.~Happel.
\newblock Hochschild cohomology of finite-dimensional algebras.
\newblock In {\em S\'{e}minaire d'{A}lg\`ebre {P}aul {D}ubreil et
  {M}arie-{P}aul {M}alliavin}, volume 1404 of {\em Lecture Notes in Math.},
  pages 108--126. Springer, Berlin, 1989.

\bibitem{LeZhou14}
J.~Le and G.~Zhou.
\newblock On the {H}ochschild cohomology ring of tensor products of algebras.
\newblock {\em J. Pure Appl. Algebra}, 218(8):1463--1477, 2014.

\bibitem{Maclane}
S.~Mac~Lane.
\newblock {\em Homology}.
\newblock Classics in Mathematics. Springer-Verlag, Berlin, 1995.

\bibitem{Nguy12}
V.~C. Nguyen.
\newblock The {T}ate-{H}ochschild cohomology ring of a group algebra.
\newblock (2012). arXiv:1212.0774.

\bibitem{Usui21}
S.~Usui.
\newblock Tate-{H}ochschild cohomology for periodic algebras.
\newblock {\em Arch. Math. (Basel)}, 116(6):647--657, 2021.

\bibitem{Wang20}
Z.~Wang.
\newblock Tate-{H}ochschild cohomology of radical square zero algebras.
\newblock {\em Algebr. Represent. Theory}, 23(1):169--192, 2020.

\bibitem{Wang_2021}
Z.~Wang.
\newblock Gerstenhaber algebra and {D}eligne's conjecture on the
  {T}ate--{H}ochschild cohomology.
\newblock {\em Trans. Amer. Math. Soc.}, 374(7):4537--4577, 2021.

\bibitem{Zaks69}
A.~Zaks.
\newblock Injective dimension of semi-primary rings.
\newblock {\em J. Algebra}, 13:73--86, 1969.

\end{thebibliography}

\end{document}